\newtheorem{Thm}{Theorem}[section]
\newtheorem{Lem}[Thm]{Lemma}
\newtheorem{Rmk}[Thm]{Remark}
\newtheorem{Cor}[Thm]{Corollary}
\newtheorem{Def}[Thm]{Definition}
\newtheorem{Ques}[Thm]{Question}
\begin{document}

\title{$T$-structure and the Yamabe invariant}
\author{Chanyoung Sung}

\date{\today}

\address{Dept. of Mathematics and Institute for Mathematical Sciences \\
Konkuk University\\
         1 Hwayang-dong, Gwangjin-gu, Seoul, KOREA}
\email{cysung@kias.re.kr}
\thanks{This work was supported by the National Research Foundation of Korea(NRF) grant funded by the Korea government(MEST). (No. 2010-0016526, 2010-0001194)}
\keywords{Yamabe invariant, T-structure, torus bundle} 
\subjclass[2000]{53C20, 55R10}

\maketitle \markboth{ Chanyoung Sung  } { $T$-structure and the Yamabe invariant  }

\begin{abstract}
The Yamabe invariant is an invariant of a closed smooth manifold,
which contains information about possible scalar curvature on it. It
is well-known that a product manifold $T^m\times B$ where $T^m$ is
the $m$-dimensional torus, and $B$ is a closed spin manifold of
nonzero $\hat{A}$-genus has zero Yamabe invariant. We generalize it
to various $T$-structured manifolds, for example $T^m$-bundles over
such $B$ whose transition functions take values in $Sp(m,\Bbb Z)$
(or $Sp(m-1,\Bbb Z)\oplus \{\pm 1\}$ for odd $m$).
\end{abstract}
\maketitle

\section{Introduction to Yamabe invariant}

The Yamabe invariant is an invariant of a smooth closed manifold
depending on its smooth topology.

Let $M$ be  a smooth closed manifold of dimension $n$. Given a smooth Riemannian metric $g$, a conformal class $[g]$ of $g$ is defined as
$$[g]=\{\varphi g \mid \varphi:M \rightarrow \Bbb
R^+ \  \textrm{is smooth} \}.$$

The famous Yamabe problem (\cite{lee}) states that there exists a
metric $\tilde{g}$ in $[g]$ which attains the minimum
$$\inf_{\tilde{g} \in
[g]} \frac{\int_M s_{\tilde{g}}\ dV_{\tilde{g}}}{(\int_M
dV_{\tilde{g}})^{\frac{n-2}{n}}},$$ where $s_{\tilde{g}}$ and
$dV_{\tilde{g}}$ respectively denote the scalar curvature and the
volume element of $\tilde{g}$.

It turns out that when $n\geq 3$, a unit-volume minimizer $\tilde{g}$ in $[g]$ has constant scalar curvature, which is equal to the above minimum value called the Yamabe constant of $[g]$ and denoted by $Y(M,[g])$.

It is known that the Yamabe constant is bounded above by $Y(S^n,[g_0])$ where $[g_0]$ denotes a standard round metric. Thus following a min-max procedure we define the Yamabe invariant $$Y(M):=\sup_{[g]} Y(M,[g])$$ of $M$.

The following facts are noteworthy.
\begin{itemize}
\item $Y(M)>0$ iff M admits a metric of positive scalar curvature.
\item If $M$ is simply connected and $\dim M\geq 5$, then $Y(M)\geq 0$.
\item For $r\in [\frac{n}{2},\infty]$, $$|Y(M,[g])|=\inf_{\tilde{g}\in [g]}(\int_{M} {|s_{\tilde{g}}|}^{r}
d\mu_{\tilde{g}})^{\frac{1}{r}}(\textrm{Vol}_{\tilde{g}})^{\frac{2}{n}-\frac{1}{r}},$$ where the infimum is attained only by the Yamabe minimizers.
\item When $Y(M,[g])\leq 0$,
$$Y(M,[g]) =
-\inf_{\tilde{g}\in [g]}(\int_{M} |s^{-}_{\tilde{g}}|^{r}
d\mu_{\tilde{g}})^{\frac{1}{r}}(\textrm{Vol}_{\tilde{g}})^{\frac{2}{n}-\frac{1}{r}},$$ where $s_{g}^-$ is defined as $\min\{s_g,0\}$.

Therefore  when $Y(M)\leq 0$,
\begin{eqnarray*}\label{form1}
Y(M)&=& -\inf_{g}(\int_{M} |s_{g}|^{r}
d\mu_{g})^{\frac{1}{r}}(\textrm{Vol}_{\tilde{g}})^{\frac{2}{n}-\frac{1}{r}}\\
&=& -\inf_{g}(\int_{M} |s_{g}^-|^{r}
d\mu_{g})^{\frac{1}{r}}(\textrm{Vol}_{\tilde{g}})^{\frac{2}{n}-\frac{1}{r}},
\end{eqnarray*}

so that $Y(M)$ measures how much negative scalar curvature is inevitable on $M$.
\item As an application of the above formula, if $M$ has an $F$-structure which will be explained in a later section,
$M$ admits a sequence of metrics with volume form converging to zero
while the sectional curvature are bounded below, so that $Y(M)\geq
0.$ (See \cite{pp}.)
\end{itemize}

\section{Computation of Yamabe invariant}

We now discuss how to compute the Yamabe invariant. When $M$ is a closed oriented surface, by the Gauss-Bonnet theorem $$Y(M)=4\pi \chi(M),$$ where $\chi(M)$ is the Euler characteristic of $M$.

When $M$ is a closed oriented 3-manifold, the Ricci flow gave many
answers in case $Y(M)\leq 0$ by the proof of geometrization theorem
due to G. Perelman.(See \cite{ander}). For example, $Y(\Bbb H^3/
\Gamma)$ is realized by the hyperbolic metric.

When $\dim M=4$, the Seiberg-Witten theory enables us to compute the
Yamabe invariant of K\"ahler surfaces through the Weitzenb\"ock
formula. LeBrun \cite{lb1,lb2,lb3} has shown that if $M$ is a
compact K\"ahler surface whose Kodaira dimension  is not equal to $-
\infty$,  then
$$Y(M)=-4\sqrt{2}\pi\sqrt{2\chi(\tilde{M})+3\tau(\tilde{M})},$$ where $\tau$ denotes the
signature and $\tilde{M}$ is the minimal model of $M$, and for $\Bbb
CP^{2}$, $$Y(\Bbb CP^{2})=4\sqrt{2}\pi\sqrt{2\chi(\Bbb
CP^{2})+3\tau(\Bbb CP^{2})}.$$

In higher dimensions, few examples have been computed so far, such
as $$Y(S^1\times S^{n-1})=Y(S^n)=
n(n-1)(\textrm{vol}(S^n(1)))^{\frac{2}{n}},$$ where $S^n(1)$ is the
unit sphere in $\Bbb R^{n+1}$, and $$Y(T^n)=Y(T^n\times
H)=Y(T^n\times B)=0,$$ where $H$ is a closed Hadarmard-Cartan
manifold, i.e. one with a metric of nonpositive sectional curvature,
and $B$ is a closed spin manifold with nonzero $\hat{A}$-genus.
These $T^n$-bundles have such property, because they admit a
T-structure and never admit a metric of positive scalar curvature by
Gromov-Lawson's enlargeability method \cite{GL,LM}.

We call a closed $n$-manifold $M$ {\it{enlargeable}} if the following holds: for any $\epsilon > 0$ and any Riemannian metric $g$ on $M$, there exists a Riemannian spin covering manifold $\tilde{M}$ of $(M,g)$ and an $\epsilon$-contracting map $f:\tilde{M}\rightarrow S^n(1)$, which is constant outside a compact subset of $\tilde{M}$ and of nonzero $\hat{A}$-degree defined as $\hat{A}(f^{-1}(\textrm{any regular value of }f))$.

Here a smooth map $F$ is called $\epsilon$-contracting if the norm of $DF$ is less than $\epsilon$. By using the Weitzenb\"ock formula for an appropriate twisted Dirac operator, they showed that such manifolds never admit a metric of positive scalar curvature.

They also generalized this to so-called weakly-enlargeable manifolds, where ``$\epsilon$-contracting'' is replaced by ``$\epsilon$-contracting on $2$-forms'' meaning that the induced map of $DF$ on tangent bi-vectors, i.e. a section of $\Lambda^2 (TM)$ has norm less than $\epsilon$.

$\hat{A}$-genus of a closed spin manifold $M$ is the integral over
$M$ of $$\hat{A}(TM):=1- \frac{p_1}{24}+\frac{-4p_2+7p_1^2}{5760}+
\cdots ,$$ where $p_i\in H^{4i}(M,\Bbb Z)$ is the $i$-th
Pontryagin class of $TM$. An important fact is that a closed spin
manifold with a metric of positive scalar curvature has zero
$\hat{A}$-genus.

Then a natural question for us to explore is
\begin{Ques}
Let $M$ be a $T^m$-bundle over a closed spin manifold $B$ with
nonzero $\hat{A}$-genus.  Is $Y(M)$ equal to zero ?
\end{Ques}

\section{$T$-structure}
An \emph{F-structure} which was introduced by Cheeger and Gromov
\cite{cg1,cg2} generalizes an effective $T^m$-action for $m\in \Bbb N$.
\begin{Def}
An F-structure on a smooth manifold is given by data
$(U_i,\hat{U_i},T^{k_i})$ with the following conditions:
\begin{enumerate}
    \item $\{U_i\}$ is a locally finite open cover.
    \item $\pi_i:\hat{U_i}\mapsto U_i$ is a finite Galois covering with
covering group $\Gamma_i$.
    \item A torus $T^{k_i}$ of dimension $k_i$ acts effectively on
$\hat{U_i}$ in a $\Gamma_i$-equivariant way, i.e. $\Gamma_i$ also
acts on $T^{k_i}$ as an automorphism so that
$$\gamma(gx)=\gamma(g)\gamma(x)$$ for any $\gamma\in
\Gamma_i,$ $g\in T^{k_i},$ and $x\in \hat{U_i}$.
    \item If $U_i\cap U_j \neq \emptyset$, then there is a common
covering of $\pi_i^{-1}(U_i\cap U_j)$ and $\pi_j^{-1}(U_i\cap U_j)$ such that it is invariant under the lifted actions of $T^{k_i}$ and $T^{k_j}$, and they commute.
\end{enumerate}
\end{Def}
As a special case, a $T$-structure is an $F$-structure in which all
the coverings $\pi_i$'s are trivial.

Typical examples of $T$-structure are torus bundles.
\begin{Thm}\label{prev}
Any $T^m$-bundle over a smooth manifold whose transition functions are
$T^m\rtimes GL(m,\Bbb Z)$-valued has a $T$-structure. In particular,
any $S^1$ or $T^2$-bundle has a $T$-structure.
\end{Thm}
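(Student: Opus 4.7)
The plan is to construct the $T$-structure explicitly from the bundle data. Let $\pi : M \to B$ be a $T^m$-bundle as in the statement, pick a trivializing cover $\{V_\alpha\}$ of $B$ and transition functions $\phi_{\alpha\beta} = (t_{\alpha\beta}, A_{\alpha\beta}) : V_\alpha \cap V_\beta \to T^m \rtimes GL(m,\Bbb Z)$, and set $U_\alpha := \pi^{-1}(V_\alpha)$. By paracompactness of $B$ we may refine so that $\{V_\alpha\}$, and hence $\{U_\alpha\}$, is locally finite. The candidate torus action on each $U_\alpha$ is fiber multiplication in the trivialization $\pi^{-1}(V_\alpha) \cong V_\alpha \times T^m$, i.e.\ $s \cdot (x,t) = (x, st)$. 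This is manifestly a smooth effective $T^m$-action, so conditions (1)--(3) of the $T$-structure definition are in place (the coverings $\hat U_\alpha \to U_\alpha$ being trivial).

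The heart of the argument is verifying condition (4) on an overlap $U_\alpha \cap U_\beta = \pi^{-1}(V_\alpha \cap V_\beta)$. Both actions preserve fibers, so each restricts to this overlap. The key step is to re-express the chart-$\alpha$ action of $s \in T^m$ in chart-$\beta$ coordinates. Applying $\phi_{\alpha\beta}$ to $(x, st)_\alpha$, using the identification $(x, u)_\beta = (x, t_{\alpha\beta}(x) A_{\alpha\beta}(x)(t))_\beta$, and exploiting abelianness of $T^m$ to cancel the conjugation by $t_{\alpha\beta}(x)$, one finds that in chart $\beta$ the chart-$\alpha$ action becomes
$$ (x, u)_\beta \ \longmapsto\ (x,\ A_{\alpha\beta}(x)(s)\cdot u)_\beta. $$
This is still left multiplication by an element of $T^m$ on the fiber, and the chart-$\beta$ action of any $r \in T^m$ is also a left multiplication, so the two actions commute because $T^m$ is abelian.

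The main subtle point I anticipate is that the $GL(m,\Bbb Z)$ twist in the transition could suggest that the two ``tori'' acting on the overlap are genuinely different. The resolution, which is really the content of the computation above, is that in any fixed trivialization both actions land in the abelian group of translations on the fiber $T^m$; translations on an abelian group commute with one another regardless of any integer-matrix twist in how the tori are identified across charts. No deeper compatibility (e.g.\ $A_{\alpha\beta}(x)(s) = s$) is needed.

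Finally, for the ``in particular'' clause I would invoke the classical facts that $\mathrm{Diff}(S^1)$ deformation-retracts onto $O(2) = S^1 \rtimes GL(1,\Bbb Z)$, and that $\mathrm{Diff}(T^2)$ deformation-retracts onto $T^2 \rtimes GL(2,\Bbb Z)$ (Earle--Eells). Consequently the structure group of any smooth $S^1$- or $T^2$-bundle reduces to the appropriate semidirect product, and the first part of the theorem applies.
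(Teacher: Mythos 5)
Your proposal is correct and follows essentially the same route as the paper: local fiberwise translation actions in each trivialization, commutativity on overlaps because the transition functions act affinely (so both actions are translations of the abelian fiber torus), and the homotopy equivalence $\mathrm{Diff}(T^m)\simeq T^m\rtimes GL(m,\Bbb Z)$ for $m=1,2$ for the second clause. You simply spell out the overlap computation that the paper dismisses as ``obvious.''
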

\begin{proof}
Here $T^m$ acts by translation, and hence
the transition functions are affine maps at each fiber
direction. Obviously the local $T^m$ actions along the fiber are commutative
on the intersections to give a global $T$-structure.

The second statement follows from the well-known fact that the
diffeomorphism group of  $T^m$ for $m=1,2$ is homotopically
equivalent to $T^m\rtimes GL(m,\Bbb Z)$. where Thus we may assume
that the transition functions are $T^m\rtimes GL(m,\Bbb
Z)$-valued.
\end{proof}

Other typical examples are manifolds with a nontrivial smooth $S^1$
action. Such examples we will use are projective spaces such as
$\Bbb RP^n, \Bbb CP^n, \Bbb HP^n$, and $CaP^2$. (For the case of the
Cayley plane which actually has an $S^3$-action, see \cite{AB}.)

Finally, an important fact is :
\begin{Thm}[Paternain and Petean \cite{pp}]
Suppose $X$ and $Y$ are $n$-manifolds with $n > 2$, which admit a
$T$-structure. Then $X\#Y$ also admits a $T$-structure.
\end{Thm}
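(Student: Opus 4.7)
The plan is to construct a $T$-structure on $X\#Y$ by gluing together three kinds of charts: the given $T$-structure of $X$ restricted to the complement of the connect-sum point $p\in X$, the analogous piece from $Y$, and one new chart installed on an open tubular neighborhood of the gluing sphere. The hypothesis $n>2$ is used to ensure $n-1\geq 2$, so that $S^{n-1}$ carries an effective $S^1$-action by rotation of two coordinates, and hence the ``neck'' $S^{n-1}\times(-1,1)$ itself admits a $T$-structure (rotation on the sphere factor, trivial on the interval).

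First, I would select connect-sum points $p\in X$ and $q\in Y$ lying inside charts $(U^X,T^{k^X})$ and $(U^Y,T^{k^Y})$ of the ambient $T$-structures, and apply the equivariant slice theorem to bring the local torus actions into linear standard form on small normal coordinate balls $B_p$ and $B_q$. Because every nontrivial linear torus representation on $\mathbb R^n$ decomposes into two-dimensional rotation blocks plus a fixed subspace, one can extract distinguished circle subgroups $S^1_X\subseteq T^{k^X}$ and $S^1_Y\subseteq T^{k^Y}$ whose actions on the respective normal spheres $\partial B_p$ and $\partial B_q$ are single rotation blocks. I would then form $X\#Y$ by removing $B_p,B_q$ and gluing the boundary spheres by an attaching diffeomorphism $\phi$ chosen to conjugate $S^1_X$ to $S^1_Y$; such a $\phi$ always exists, since any two rotation circles in $SO(n)$ are conjugate in $O(n)$.

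With this set-up, the $T$-structure on $X\#Y$ is given by three families of charts: (i) the restrictions $U_i^X\cap(X\setminus\bar B_p)$ of $X$'s charts, each with its inherited $T^{k_i^X}$-action, (ii) the analogous restrictions from $Y$, and (iii) a single new chart on an open collar $N\cong S^{n-1}\times(-1,1)$ of the gluing sphere, carrying the rotation $S^1$-action above. These cover $X\#Y$. The only compatibility that is not automatic is on overlaps $N\cap U_i^X$: by construction the collar's $S^1$-action is literally the subgroup $S^1_X\subseteq T^{k^X}$, so it commutes with every element of the abelian $T^{k^X}$, and commutation with the remaining charts of $X$'s $T$-structure that happen to meet the collar is inherited from the commutation relations already present in $X$. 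The $Y$-side is symmetric.

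The main obstacle is that removing a round ball around $p$ is not equivariant under the local torus action unless $p$ is a fixed point; when $p$ lies on a positive-dimensional orbit, the restrictions in (i) need not be preserved by $T^{k^X}$, and the simple argument above breaks down. The clean case is to arrange that $p$ and $q$ are fixed points of suitable subgroups, which one should be able to do after refining the given $T$-structure to one containing a chart with a fixed point near $p$. Handling the general case requires replacing the removed ball by an equivariant tube of the orbit through $p$, verifying that the resulting manifold is still diffeomorphic to the genuine connect sum, and installing the rotation chart (iii) on the resulting tubular neck; this identification, together with the bookkeeping to keep all overlaps inside a controlled annular region where the commutation argument applies, is where the delicate technical work lies.
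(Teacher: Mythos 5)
The paper does not actually prove this statement---it is quoted from Paternain--Petean \cite{pp} without proof---so your attempt has to be judged against what a complete argument requires. Your overall architecture (restricted charts from $X$ and $Y$, one neck chart $S^{n-1}\times(-1,1)$ with a rotation action, and $n>2$ so that $S^{n-1}$ carries such an action) is the right shape, but the proof stands or falls on exactly the issue you flag in your last paragraph, and neither of the two resolutions you float there works. First, a $T$-structure need not have any fixed points at all: the single-chart structure given by a free $S^1$-action on a principal circle bundle is a perfectly good $T$-structure, and then no point $p$ admits an invariant round ball, no slice representation on a normal $S^{n-1}$ exists, and ``refining the given $T$-structure to one containing a chart with a fixed point near $p$'' is not a harmless normalization --- producing a \emph{new} local circle action with a fixed point that still commutes with the given charts on overlaps is precisely the content of the theorem. (The standard device is to take an invariant tube $S^1\times D^2\times D^{n-3}$ around a circle orbit through $p$ and adjoin a new chart in which $S^1$ rotates the $D^2$-factor; this commutes with the translation action of the old chart and has a fixed core circle, at a point of which one then performs the connected sum. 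Nothing of this sort appears in your write-up.) Second, your fallback for the non-fixed case --- removing an equivariant tube of the orbit through $p$ and regluing --- does not produce $X\#Y$: excising $S^1\times D^{n-1}$ and gluing along $S^1\times S^{n-2}$ is a surgery on the orbit and yields a different manifold in general.

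There are also two smaller but genuine errors inside your ``clean case.'' The claim that from a nontrivial linear torus representation one can always extract a circle subgroup acting on $S^{n-1}$ as a \emph{single} rotation block is false: a circle acting on $\mathbb{R}^4$ with weights $(2,3)$ or $(1,1)$ has no such subgroup, and correspondingly two circles in $SO(n)$ with different weight patterns are \emph{not} conjugate in $O(n)$ (conjugacy classes of circles are classified by their weight multisets up to sign). This particular point is repairable, because you do not actually need $S^1_X$ and $S^1_Y$ to be conjugate: it suffices to use two overlapping neck charts and to choose the gluing identification so that the two linear circle actions on $S^{n-1}$ land in a common maximal torus of $SO(n)$, hence commute. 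Finally, even at a genuine fixed point, for the restricted charts in your family (i) to be invariant you need $\bar B_p$ to be invariant under the \emph{full} local torus $T^{k^X}$ (and under every other local torus whose chart contains $p$), not merely under the chosen circle $S^1_X$; this requires an additional shrinking-and-invariance argument for the cover that you do not supply.
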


\section{Main Results}

Motivated by Gromov-Lawson's enlargeability technique, we prove :
\begin{Thm}\label{th0}
Let $B$ be a closed spin manifold of dimension $4d$ with nonzero
$\hat{A}$-genus, and $M$ be a $T^m$-bundle over $B$ whose transition
functions take values in $Sp(m,\Bbb Z)$ (or $Sp(m-1,\Bbb Z)\oplus
\{\pm 1\}$ for odd $m$).  Then
$$Y(M)=0.$$
\end{Thm}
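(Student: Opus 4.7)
The plan is to establish $Y(M)\ge 0$ and $Y(M)\le 0$ separately.

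For $Y(M)\ge 0$, observe that $Sp(m,\mathbb{Z})$ and $Sp(m-1,\mathbb{Z})\oplus\{\pm 1\}$ both embed in $GL(m,\mathbb{Z})$, so Theorem~\ref{prev} endows $M$ with a $T$-structure, and the Paternain--Petean result quoted in Section~1 then yields $Y(M)\ge 0$.

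For $Y(M)\le 0$, by the formula for $Y$ in the non-positive case it suffices to rule out PSC metrics on $M$. I would adapt the Gromov--Lawson twisted-Dirac argument used for the product case $T^m\times B$. Since $Sp(m,\mathbb Z)$ is discrete and $\tilde B$ is simply connected, the pullback of $M$ to $\tilde B$ is a trivial $T^m$-bundle, so the universal cover is $\tilde M=\tilde B\times\mathbb R^m$; the deck group $\pi_1(M)$ acts by fiber-lattice translations $\mathbb Z^m$ combined with deck transformations on $\tilde B$ twisted by the monodromy $\rho:\pi_1(B)\to Sp(m,\mathbb Z)$ on $\mathbb R^m$. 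Because $\rho$ preserves the standard symplectic form on $T^m$, the fiberwise symplectic form represents a well-defined integral class $[\omega_V]\in H^2(M,\mathbb Z)$.

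From $[\omega_V]$, for each large integer $k$ I would build an almost-flat complex line bundle $L_k\to M$ whose restriction to every fiber has first Chern class $k[\omega_V]$ and whose curvature norm decays like $1/k$, by pulling back fiberwise line bundles with prescribed Chern class under $k$-fold self-covers of the $T^m$-fibers and gluing via the symplectic invariance of $\rho$. After passing to a finite cover $M'\to B'\to B$ corresponding to a level-$N$ congruence subgroup of $Sp(m,\mathbb Z)$ (ensuring $M'$ is spin and preserving $\hat A(B')\neq 0$ by multiplicativity), twisting the Dirac operator with $L_k$ and applying the Atiyah--Singer index theorem gives
\[\mathrm{ind}(D\otimes L_k)=\hat A(B')\cdot\int_{T^m}e^{k[\omega_V]}\neq 0.\]
Under a hypothetical PSC metric on $M'$, the almost-flatness of $L_k$ together with the Lichnerowicz--Weitzenb\"ock identity would force this index to vanish for $k$ large, a contradiction which transfers back to $M$ since a PSC metric on $M$ would pull back to one on $M'$. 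The odd-$m$ case is handled by splitting off the $S^1$-factor carrying the $\{\pm 1\}$-monodromy and applying the same argument to the remaining $T^{m-1}$-subbundle.

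The principal obstacle is the global construction of $L_k$: the fiberwise line bundles must assemble compatibly across $B$, which can fail for a generic flat symplectic bundle but is precisely what the integrality condition $Sp(m,\mathbb Z)$ (as opposed to $Sp(m,\mathbb R)$) guarantees. Related technical points are the choice of connection on $L_k$ needed for the $1/k$ curvature decay, and the spin-structure bookkeeping under the congruence-subgroup cover.
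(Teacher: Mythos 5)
Your overall strategy coincides with the paper's: $Y(M)\ge 0$ from the $T$-structure exactly as you say, and $Y(M)\le 0$ by ruling out positive scalar curvature via a Dirac operator twisted by an almost-flat line bundle built from the $Sp(m,\mathbb{Z})$-invariant fiberwise symplectic form, with the almost-flatness coming from unwrapping the torus fibers. However, there are three points where your sketch either has a genuine gap or would need to be repaired. First and most seriously, your device for making the total space spin --- passing to a cover associated to a level-$N$ congruence subgroup of $Sp(m,\mathbb{Z})$ --- is unjustified: there is no reason that restricting the monodromy to a congruence subgroup kills $w_2$ of the flat vertical bundle $V$, and you give no argument. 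The paper sidesteps this entirely, and this is where the symplectic hypothesis does its second job: since $V$ is a flat symplectic $\mathbb{R}^{2k}$-bundle it admits a compatible complex structure, so $w_2(TM)\equiv c_1(V)\ (\mathrm{mod}\ 2)$ and $M$ is canonically spin$^c$; one then uses the spin$^c$ Dirac operator (whose determinant line is flat, so it contributes nothing extra to the Weitzenb\"ock remainder, and $\hat A(V)=1$ in the index computation). Without this, or a correct substitute, your index argument does not get off the ground.

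Second, as literally stated your line bundles $L_k\to M$ cannot exist: on a fixed compact $(M,h)$ a bundle whose fiberwise Chern class is $k[\omega_V]$ has curvature growing, not decaying, in $k$. The decay $O(1/n^2)$ is obtained in the paper by passing to the genuine covering space $M_n\to M$ (fibers $\mathbb{R}^{2k}/n\Lambda$ with the \emph{same} locally constant transition functions), equipping it with the pulled-back metric $p^*h$, and taking the line bundle with $c_1=[\omega]$ equal to the generator on the enlarged fiber; you gesture at the fiber self-covers but place $L_k$ on $M$ itself, which is contradictory. Third, your odd-$m$ reduction by ``splitting off'' the $S^1$-factor leaves you trying to run an index argument on an odd-dimensional total space (or on a subbundle that does not carry the hypothetical PSC metric), where the index vanishes identically; the paper instead goes \emph{up} a dimension, forming an $S^1$-bundle over $M$ whose transition functions duplicate the $\{\pm1\}$-factor, obtaining a $T^{m+1}$-bundle over $B$ with $Sp(m+1,\mathbb{Z})$-valued transition functions and a PSC product-type metric, and then invokes the even case.
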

\begin{proof}
By theorem \ref{prev}, $M$ has a $T$-structure so that $Y(M)\geq 0$.

We only have to show that $M$ never admits a metric of positive
scalar curvature. To the contrary, suppose that it admits such a
metric $h$, and we will derive a contradiction. The basic idea is to
apply the Bochner-type method to a twisted Spin$^c$ bundle on $M$
whose topological index is nonzero.

First, we consider the case when  $m$ is even, say $2k$. Let
$\Lambda$ denote a lattice in $\Bbb R^{2k}$ so that $T^{2k}= \Bbb
R^{2k}/ \Lambda$. Take an integer $n \gg 1$. There is an obvious
covering map from $\Bbb R^{2k}/ n\Lambda$ onto $\Bbb R^{2k}/
\Lambda$ of degree $n^{2k}$, and we claim that this covering map
can be extended to all the fibers in $M$ to give a covering $p:
M_n \rightarrow M$. The following lemma justifies this :
\begin{Lem}
The same transition functions as $\Bbb R^{2k}/ \Lambda$-bundle $M$
give $\Bbb R^{2k}/ n\Lambda$-bundle $M_n$ with the covering
projection $p$.
\end{Lem}
\begin{proof}
For a transition map $g_{\alpha\beta}\in Sp(2k,\Bbb Z)$ downstairs,
the same transition map $g_{\alpha\beta}$ upstairs is the unique
lifting map which satisfies $p\circ
g_{\alpha\beta}=g_{\alpha\beta}\circ p$ and sends 0 to 0.

It only needs to be proved that the transition maps satisfy the
axioms for the bundle, in particular the axiom
$g_{\beta\gamma}\circ g_{\alpha\beta}=g_{\alpha\gamma}$. This
follows from the uniqueness of the lifting  map sending 0 to 0.
(In fact, this cocycle condition holds without modulo $\Bbb Z$.)
\end{proof}

We endow $M_n$ with a metric $ h_n:=p^* h$.

\begin{Lem}
 There exists a closed 2-form $\omega$ on $M_n$ such that $\omega^{k+1}=0$
 and it restricts to a generator of $H^2(T^{2k},\Bbb Z)$
 at each fiber $T^{2k}$.
\end{Lem}
\begin{proof}
For each $U\times T^{2k}$ where $U$ is an open ball in $B$, take
$\omega$ to be a standard symplectic form of $T^{2k}$ representing
a generator of $H^2(T^{2k},\Bbb Z)$. Since $\omega$ is invariant
under $Sp(2k,\Bbb Z)$, it is globally defined on $M_n$. (Note that
the transition functions are locally constant.) Obviously
$\omega^{k+1}=0$ at each point.
\end{proof}

Let $E$ be the complex line bundle on $M_n$ whose first Chern
class is $[\omega]$. Take a connection $A^E$ of $E$ whose
curvature 2-form $R^E=dA^E$ is equal to $-2\pi i \omega$.

We claim that $$|R^E|_{h_n}\rightarrow 0\ \ \ \ \ \ \ \textrm{as}\ \
n\rightarrow \infty.$$
\begin{Lem}
  $|R^E|_{h_n}= O(\frac{1}{n^2})$.
\end{Lem}
\begin{proof}
For a local coordinate $(x_1,\cdots,x_{4d},y_1,\cdots,y_{2k})$ of
$B\times T^{2k}$, $$\omega = dy_1\wedge dy_2+\cdots +
dy_{2k-1}\wedge dy_{2k}.$$ We will show that
$|dy_{\mu}|_{h_n}=O(\frac{1}{n})$ for all $\mu$. First,
$$h_n(\frac{\partial}{\partial x_i},\frac{\partial}{\partial x_j})=h(\frac{\partial}{\partial x_i},\frac{\partial}{\partial x_j}),$$
$$h_n(\frac{\partial}{\partial x_i},\frac{\partial}{\partial y_{\mu}})=nh(\frac{\partial}{\partial x_i},\frac{\partial}{\partial y_{\mu}}),$$
 $$h_n(\frac{\partial}{\partial y_{\mu}},\frac{\partial}{\partial y_{\nu}})=n^2h(\frac{\partial}{\partial y_{\mu}},\frac{\partial}{\partial y_{\nu}})$$ for all $i,j,\mu$, and $\nu$. Thus
$$h_n= \left(
   \begin{array}{cc}
     O(1) & O(n) \\
     O(n) & O(n^2) \\
   \end{array}
 \right),
$$ where the block division is according to the division by $x$ and $y$ coordinates, and
\begin{eqnarray*}
(h_n)^{-1}&=&\frac{1}{\det(h_n)}\textrm{adj}(h_n)\\ &=& \frac{1}{O(n^{4k})}\left(
    \begin{array}{cc}
      O(n^{4k}) & O(n^{4k-1}) \\
      O(n^{4k-1}) &  O(n^{4k-2}) \\
    \end{array}
  \right)\\ &=& \left(
    \begin{array}{cc}
      O(1) & O(\frac{1}{n}) \\
      O(\frac{1}{n}) &  O(\frac{1}{n^2}) \\
    \end{array}
  \right),
\end{eqnarray*}
which means $|dx_i|_{h_n}=O(1)$ and $|dy_{\mu}|_{h_n}=
O(\frac{1}{n})$ for all $i$ and $\mu$, completing the proof.
\end{proof}

In order to use the Bochner argument, we need to show that $M_n$ is
spin$^c$. Using the orthogonal decomposition by $h_n$,
$$TM_n=V\oplus H=V\oplus \pi^*(TB),$$ where $V$ and $H$ respectively denote the vertical and horizontal space,
and $\pi: M_n\rightarrow B$ be the torus bundle projection.
Obviously $H$ is spin, because $B$ is spin. Since $V$ is a symplectic
$\Bbb R^{2k}$-vector bundle, it admits a compatible almost-complex structure, and hence it can be viewed as a $\Bbb C^k$-vector
bundle. Thus
$$w_2(M_n)=w_2(V)+w_2(H)\equiv c_1(V) \ \ \textrm{mod}\ 2$$ meaning
that $M_n$ is spin$^c$. Let $S$ be the associated vector bundle to
the Spin$^c$ bundle over $M_n$ obtained using $\sqrt{\det_{\Bbb C} V}$.

Consider a twisted spin$^c$ Dirac operator $D^E$ on $S\otimes E$
where $E$ is equipped with a connection $A^E$. The Weitzenb\"ock
formula says that
$$(D^E)^2=\nabla^*\nabla +\frac{1}{4}s_{h_n}+\frak{R}^E.$$
Here $\frak{R}^E(\sigma\otimes v) = \sum_{i<j}(e_ie_j\sigma)\otimes (R^E_{e_i,e_j}v)$
where $\{e_i\}$ is an orthonormal frame for $(M_n,h_n)$. Note that
$$|\frak{R}^E|_{h_n}\leq C |R^E|_{h_n}$$ where $C$ is a positive constant depending on the dimension of $M$.
By taking $n$ sufficiently large, we can ensure that $s_{h_n}>
|\frak{R}^E|_{h_n}$ everywhere, and hence $\ker D^E = 0$. Thus the
index of the operator
$$D^E_+ : \Gamma(S_+\otimes E)\rightarrow \Gamma(S_-\otimes E)$$ is
$$\ker D^E|_{S_+\otimes E}- \ker D^E|_{S_-\otimes E}=0,$$
where $S_{\pm}$ respectively denotes the plus and negative spinor
bundle.

On the other hand, we can also compute the index using the
Atiyah-Singer index theorem  \cite{LM}. Note that $V$ has locally
constant transition functions, and hence can be given a flat
connection. This implies that $\hat{A}(V)=1$ so that
\begin{eqnarray*}
\textrm{index}(D^E_+)&=& \{ch (E) \cdot \hat{A}(TM_n)\}[M_n]\\
&=& \{(1+[\omega]+\cdots +\frac{1}{k!}[\omega]^k )\cdot\hat{A}(V)\cdot\hat{A}(\pi^*(TB))\}[M_n]\\
&=& \{(1+[\omega]+\cdots +\frac{1}{k!}[\omega]^k)\cdot\pi^*(\hat{A}(B))\}[M_n]\\
&=& \{\frac{1}{k!}[\omega]^k\cdot\pi^*(\hat{A}_d(B))\}[M_n]\\
&=& \int_{\pi^{-1}(PD[\hat{A}_d(B)])}\frac{1}{k!}[\omega]^k\\
&=& (\hat{A}(B)[B])\int_{\pi^{-1}(pt)}\frac{1}{k!}[\omega]^k\\
&\ne& 0,
\end{eqnarray*}
which yields a contradiction.

The odd $m$ case is reduced to the even case. If $m$ is odd,
consider an an $S^1$-bundle over $M$ with transition functions
exactly equal to the transition functions $\{\pm 1\}$ of the last
$S^1$-factor of $T^{m}$ in $M$ over $B$. Then $M'$ is a
$T^{m+1}$-bundle over $B$ with transition functions taking values
in $Sp(m+1,\Bbb Z)$. We put a locally product metric on $M'$. Then
it also has positive scalar curvature, yielding a contradiction to
the above even case.

\end{proof}

\begin{Thm}\label{th1}
Let $B$ be a closed spin manifold of dimension $4d$ with nonzero
$\hat{A}$-genus, and $M$ be an $S^1$ or $T^2$-bundle over $B$ whose
transition functions take values in $GL(1,\Bbb Z)$ or $GL(2,\Bbb Z)$
respectively. Then
$$Y(M)=0.$$
\end{Thm}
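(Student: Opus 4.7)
The plan is to reduce both cases to Theorem~\ref{th0}. As in the proof of that theorem, Theorem~\ref{prev} already supplies a $T$-structure on $M$ (it applies unconditionally to $S^1$- and $T^2$-bundles), so $Y(M) \geq 0$ for free. All the content of the statement therefore lies in ruling out metrics of positive scalar curvature on $M$.

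The $S^1$ case requires no new work. Indeed $GL(1,\Bbb Z) = \{\pm 1\}$ coincides with $Sp(0,\Bbb Z) \oplus \{\pm 1\}$, which is exactly the group allowed in the odd-$m$ clause of Theorem~\ref{th0} with $m=1$. So Theorem~\ref{th0} applies directly and gives $Y(M)=0$.

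For the $T^2$ case I would argue by passing to a double cover. Since $SL(2,\Bbb Z) = Sp(2,\Bbb Z)$ has index two in $GL(2,\Bbb Z)$, with the quotient detected by the determinant homomorphism $\rho$, applying $\rho$ to the transition cocycle $\{g_{\alpha\beta}\}$ of $M \to B$ yields a class $w \in H^1(B,\Bbb Z/2)$ and a classifying double cover $q : \tilde B \to B$ on which $q^* w$ vanishes. Setting $\tilde M := q^* M$ gives a $T^2$-bundle over $\tilde B$ whose transition functions can, after a coboundary adjustment, be arranged to lie in $SL(2,\Bbb Z)$. The base $\tilde B$ is a closed spin $4d$-manifold, since spin structures pull back along coverings, and $\hat A(\tilde B) = 2 \hat A(B) \neq 0$ by the multiplicativity of $\hat A$ under finite covers. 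Theorem~\ref{th0} then applies to $\tilde M \to \tilde B$ and forbids a metric of positive scalar curvature on $\tilde M$. Since such a metric on $M$ would pull back to one on the double cover $\tilde M$, no such metric exists on $M$, and $Y(M)\le 0$; combined with $Y(M)\ge 0$ this gives $Y(M)=0$.

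The only point that calls for a moment's care is the coboundary adjustment. Because $q^* w$ is trivial, there exists a $\{\pm 1\}$-cochain $\{\epsilon_\alpha\}$ with $\rho(g_{\alpha\beta}) = \epsilon_\alpha\, \epsilon_\beta^{-1}$ upstairs; lifting each $\epsilon_\alpha$ to a diagonal matrix $E_\alpha \in GL(2,\Bbb Z)$ of determinant $\epsilon_\alpha$ and conjugating by $\{E_\alpha\}$ produces an equivalent cocycle with values in $SL(2,\Bbb Z) = Sp(2,\Bbb Z)$. The case where $\tilde B$ is disconnected (i.e.\ $w=0$ already on $B$) is absorbed into this discussion, since Theorem~\ref{th0} applies componentwise to a closed spin manifold with nonzero total $\hat A$-genus. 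No other ingredient beyond Theorem~\ref{th0} seems required.
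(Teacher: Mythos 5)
Your proposal is correct and follows essentially the same route as the paper: reduce to Theorem~\ref{th0} by passing to the fiber-orientation double cover of the base (the paper realizes the same cover via the index-two subgroup of $\pi_1(B)$ preserving the fiber orientation, rather than via the \v{C}ech coboundary adjustment of the determinant cocycle, but these are the same construction), using that positive scalar curvature lifts to covers and that $\hat A$ multiplies under finite covers. The only cosmetic difference is the $S^1$ case, which you dispatch by invoking the odd-$m$ clause of Theorem~\ref{th0} directly with $m=1$, whereas the paper first crosses with $S^1$ to reach the $T^2$ case; both are valid.
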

\begin{proof}
Again by theorem \ref{prev}, $M$ has a $T$-structure so that $Y(M)\geq 0$.
It remains to show $M$ never admits a metric of positive scalar curvature,
and let's assume it does.

First, the case of $S^1$ bundle can be reduced to the case of
$T^2$ bundle by considering a Riemannian product $M\times S^1$
which also has positive scalar curvature. From now on, we consider
the case of $T^2$ bundle.

Secondly, we may also assume that $M$ is orientable, i.e. the
transition functions for the torus bundle are
orientation-preserving. Otherwise, we consider $\bar{M}$ from the
lemma below, which also admits a metric of positive scalar
curvature by lifting the metric of $M$.
\begin{Lem}
There exists a finite covering $\bar{M}$ of $M$ such that $\bar{M}$
is an orientable $T^2$-bundle over a closed spin manifold of nonzero
$\hat{A}$-genus.
\end{Lem}
\begin{proof}
Let $\hat{B}$ be the universal cover of $B$, and $\hat{M}$ be the
manifold obtained by lifting the torus bundle over $B$ to
$\hat{B}$. Since $\hat{B}$ is simply-connected, $\hat{M}$ is
orientable, and $\pi_1(B)$ acts on $\hat{M}$ to give $M=\hat{M}/
\pi_1(B)$.

Let $G$ be a subset of $\pi_1(B)$, which consists of elements
preserving orientation of the fiber torus. Then $G$ is a subgroup
of index 2. Thus $\hat{M}/G$ is an orientable  $T^2$-bundle over
$\hat{B}/G$ which is a double cover of $B$ so that it is  also
spin with nonzero $\hat{A}$-genus.
\end{proof}

Now if $M$ is orientable, its transition functions take values in $SL(2,\Bbb Z)=Sp(2,\Bbb Z)$ so that
the previous theorem can be applied to derive a contradiction.
\end{proof}

\begin{Rmk}
In fact,  theorem \ref{th0} holds for any $T^m$-bundle with
$T^m\rtimes GL(m,\Bbb Z)$-valued transition functions, which has a
finite covering diffeomorphic to $M$ as in theorem \ref{th0}.
\end{Rmk}

Combining our results with the previous results in
\cite{sung1}, we can compute more general $T$-structured manifolds :
\begin{Cor}
Let $M$ be a $T^m$-bundle in all the above so that $Y(M)=0$. If
$\dim M=4n$, then
$$Y(M\sharp\ k\ \Bbb HP^n\sharp \ l\ \overline{\Bbb HP^n})=0,$$ and  if
$\dim M=16$, then
$$Y(M\sharp\ k\ \Bbb HP^4\sharp \ l\ \overline{\Bbb HP^4}\sharp\ k'\ CaP^2\sharp \ l'\ \overline{CaP^2})=0,$$ where $k,l,k'$, and $l'$ are nonnegative integers, and the overline denotes the reversed orientation.
\end{Cor}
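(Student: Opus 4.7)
The plan is to sandwich the Yamabe invariant between $\geq 0$ and $\leq 0$, combining the T-structure machinery assembled in this paper with the prior obstruction results from \cite{sung1}.

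For the $Y \geq 0$ half, I would check that every summand in the connected sum admits a T-structure and then iterate the Paternain--Petean stability theorem stated at the end of Section 3. The manifold $M$ itself has a T-structure by Theorem \ref{prev} (which is precisely how Theorems \ref{th0} and \ref{th1} already obtained $Y(M)\geq 0$). The quaternionic projective space $\Bbb HP^{n}$ admits a smooth effective $S^{1}$-action (for instance scalar multiplication by a circle in $\Bbb H^{\ast}$), and $CaP^{2}$ carries the $S^{3}$-action noted in the remarks after Theorem \ref{prev}, whose restriction to a circle gives a T-structure. Orientation reversal does not affect existence of a T-structure. Applying the Paternain--Petean theorem to the $k+l+k'+l'+1$ summands produces a T-structure on the full connected sum, and the last bullet of Section 1 yields $Y\geq 0$.

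For the $Y \leq 0$ half, I would invoke \cite{sung1}, whose main thrust is to show that Gromov--Lawson/twisted-Dirac obstructions to positive scalar curvature survive connected sum with $\Bbb HP^{n}$ (and, in dimension $16$, with $CaP^{2}$) in either orientation. Since Theorem \ref{th0} (or \ref{th1}) proves that $M$ admits no metric of positive scalar curvature via exactly this sort of twisted spin$^c$ obstruction, the same mechanism applies to the connected sum, forcing $Y\leq 0$.

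Putting the two inequalities together gives $Y = 0$. The T-structure half is essentially bookkeeping built on results already in the paper; the substantive step, which I expect to be the main obstacle, is quoting from \cite{sung1} the exact statement whose hypotheses match our $T^{m}$-bundle setting, verifying that the Dirac-type obstruction used there is compatible with the one produced by the twisted spin$^c$ argument of Theorem \ref{th0}, and checking that all orientations and both of the sporadic summands $\Bbb HP^{4}$, $CaP^{2}$ in dimension $16$ are covered.
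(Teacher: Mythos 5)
Your proposal matches the paper's intended argument: the corollary is stated there with no separate proof beyond the remark that it follows by ``combining our results with the previous results in \cite{sung1}'', and your reconstruction --- $Y\geq 0$ from the $T$-structures on $M$, $\Bbb HP^n$, and $CaP^2$ together with the Paternain--Petean connected-sum theorem, and $Y\leq 0$ from the persistence of the Dirac-type obstruction under these connected sums as established in \cite{sung1} --- is exactly that combination. Your identification of the $S^1$-actions on the projective-space summands as the source of their $T$-structures also agrees with the examples singled out in Section 3.
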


Finally we remark that theorem \ref{th1} can be extended to the case
when $\dim B\leq 2$ by applying the Seiberg-Witten theory :
\begin{Thm}
Let $B$ be a circle or a closed oriented surface, and $X$ be an $S^1$
or $T^2$-bundle over $B$. Suppose that $X$ or $X\times T^m$ for $m=1,2$ has a finite cover $M$ with $b_2^+(M) >1$ which is a $T^2$-bundle over a surface whose
transition functions take values in a discrete subgroup of
$T^2\rtimes SL(2,\Bbb Z)$. Then
$$Y(X)=0.$$
\end{Thm}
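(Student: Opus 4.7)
The strategy is to sandwich $Y(X)$: the $T$-structure on $X$ supplies the lower bound $Y(X)\geq 0$, and Seiberg--Witten theory applied to the $4$-dimensional finite cover $M$ supplies the upper bound $Y(X)\leq 0$. Since $X$ is an $S^1$ or $T^2$-bundle over a circle or surface, Theorem \ref{prev} endows $X$ with a $T$-structure, and hence $Y(X)\geq 0$ by the Paternain--Petean theorem.

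For the upper bound I would argue by contradiction. Suppose $X$ admits a metric $g$ of positive scalar curvature. If $M$ already covers $X$, lift $g$ to $M$ directly. Otherwise $M$ covers $X\times T^m$, in which case the product of $g$ with a flat metric on $T^m$ has scalar curvature $s_g>0$ and lifts to a psc metric on $M$. In either case $M$ carries a psc metric, and it suffices to contradict this.

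The heart of the proof is to exhibit a symplectic structure on $M$, so that Taubes' theorem applies. Let $\pi:M\to\Sigma$ denote the bundle projection. Because the transition functions take values in a discrete subgroup of $T^2\rtimes SL(2,\Bbb Z)$ they are locally constant, and since both translations and elements of $SL(2,\Bbb Z)=Sp(2,\Bbb Z)$ preserve the fiberwise area form $\omega_T=dy_1\wedge dy_2$, this $\omega_T$ patches to a globally defined closed $2$-form on $M$. Choosing an area form $\omega_\Sigma$ on the base, the form $\omega_T+c\,\pi^*\omega_\Sigma$ is nondegenerate for $c>0$ sufficiently large, so $M$ is symplectic. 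Since $b_2^+(M)>1$, Taubes' theorem gives a nonzero Seiberg--Witten invariant for the canonical Spin$^c$-structure, and the standard Weitzenb\"ock obstruction for the SW equations then rules out any psc metric on $M$. This contradicts the previous paragraph, so $X$ admits no psc metric, $Y(X)\leq 0$, and combined with $Y(X)\geq 0$ we conclude $Y(X)=0$.

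The main obstacle I anticipate is justifying the global existence and closedness of $\omega_T$ on $M$: this is precisely where the discreteness hypothesis on the transition functions is essential, for otherwise the varying monodromy and translation parts would contribute extra terms under pullback and $\omega_T$ would not patch. A minor auxiliary point is the need to cross with a torus in the low-dimensional cases $\dim X\leq 3$ so that $4$-dimensional Seiberg--Witten theory is available at all, but this is explicitly allowed in the hypotheses of the theorem.
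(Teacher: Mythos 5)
Your proposal is correct and follows essentially the same route as the paper: build a symplectic form on the four-dimensional cover $M$ from the fiberwise area form plus a pullback of an area form on the base, invoke Taubes' theorem (using $b_2^+(M)>1$) to get a nonzero Seiberg--Witten invariant for the canonical Spin$^c$ structure, and conclude that $M$, hence $X$, admits no positive scalar curvature metric, while the $T$-structure gives $Y(X)\geq 0$. You simply spell out a few steps the paper leaves implicit (lifting the psc metric through the product with $T^m$ and the finite cover, and the patching of the fiberwise form via the discreteness of the transition functions).
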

\begin{proof}
It suffices to show that $M$ never admits a metric of positive scalar
curvature.

Using the 2-form $\omega$ on $M$
which restricts to a standard symplectic form on each fiber, we have a
symplectic form $\pi^*\sigma+\omega$ on $M$,
where $\sigma$ is a symplectic form of $B$, and $\pi:
M\rightarrow B$ is the bundle projection.

Then the Seiberg-Witten invariant of  the canonical Spin$^c$ structure of
$M$ is 1 so that it never admits a metric of positive scalar curvature.
\end{proof}

\bigskip


\end{document}